\documentclass{agtart_a}
\pdfoutput=1


\title[Exotic relation modules and homotopy types for certain 
       1--relator groups]{Exotic relation modules and homotopy types 
       \\for certain 1--relator groups}                    

\author{Jens Harlander}
\givenname{Jens}
\surname{Harlander}
\address{Department of Mathematics\\Western Kentucky
         University\\\newline
         Bowling Green, KY 42101\\USA}
\email{jens.harlander@wku.edu}
\urladdr{}

\author{Jacqueline A Jensen}
\givenname{Jacqueline A}
\surname{Jensen}
\address{Department of Mathematics and Statistics\\
         Sam Houston State University\\\newline
         Huntsville, TX 77341\\USA}
\email{jensen@shsu.edu}
\urladdr{}

\volumenumber{6}
\issuenumber{}
\publicationyear{2006}
\papernumber{75}
\startpage{2163}
\endpage{2173}

\doi{}
\MR{}
\Zbl{}

\keyword{2-dimensional complex}
\keyword{homotopy-type}
\keyword{stably free modules}
\subject{primary}{msc2000}{57M20}
\subject{secondary}{msc2000}{57M05}

\received{18 May 2006}
\revised{}
\accepted{25 September 2006}
\published{19 November 2006}
\publishedonline{19 November 2006}
\proposed{}
\seconded{}
\corresponding{}
\editor{}
\version{}

\arxivreference{}




\AtBeginDocument{\let\bar\wbar\let\tilde\wtilde}


\makeatletter
\def\cnewtheorem#1[#2]#3{\newtheorem{#1}{#3}[section]
\expandafter\let\csname c@#1\endcsname\c@thm}
\makeatother

\newtheorem{thm}{Theorem}[section]   
\cnewtheorem{lemma}[thm]{Lemma}       
\cnewtheorem{corollary}[thm]{Corollary}
\cnewtheorem{proposition}[thm]{Proposition}

\theoremstyle{definition}
\cnewtheorem{defn}[thm]{Definition}

\def\<{\langle}
\def\>{\rangle}

\,
\,
\,


\begin{document}

\begin{asciiabstract}
Using stably free non-free relation modules we construct an infinite
collection of 2-dimensional homotopy types, each of
Euler-characteristic one and with trefoil fundamental group. This
provides an affirmative answer to a question asked by Berridge and
Dunwoody [J. London Math. Soc. 19 (1979) 433-436].  We also give new
examples of exotic relation modules. We show that the relation module
associated with the generating set x, y^4 for the Baumslag-Solitar
group <x, y | xy^2x^{-1}=y^3> is stably free non-free of rank one.
\end{asciiabstract}

\begin{htmlabstract}
Using stably free non-free relation modules we construct an infinite
collection of 2--dimensional homotopy types, each of
Euler-characteristic one and with trefoil fundamental group. This
provides an affirmative answer to a question asked by Berridge and
Dunwoody [J. London Math. Soc. 19 (1979) 433--436]. We also give new
examples of exotic relation modules. We show that the relation module
associated with the generating set { x, y<sup>4</sup>} for the
Baumslag--Solitar group < x, y |
xy<sup>2</sup>x<sup>-1</sup>=y<sup>3</sup> > is stably free
non-free of rank one.
\end{htmlabstract}

\begin{abstract} 
Using stably free non-free relation modules we construct
an infinite collection of 2--dimensional homotopy types, each of
Euler-characteristic one and with trefoil fundamental group. This
provides an affirmative answer to a question asked by Berridge and
Dunwoody \cite{BerridgeDunwoody}. We also give new examples of
exotic relation modules. We show that the relation module
associated with the generating set $\{ x, y^4\}$ for the
Baumslag--Solitar group $\langle x, y \ \vert\
xy^2x^{-1}=y^3\rangle$ is stably free non-free of rank one.
\end{abstract}

\maketitle

\section{Introduction}

Given a group $G$ and an integer $n$ the homotopy classification
program in dimension two aims to determine all 2--complexes (up to
homotopy) with fundamental group $G$ and Euler-characteristic $n$ (see
Dunwoody \cite{Dunwoody1}, Harlander and Jensen
\cite{HarlanderJensen}, Hog-Angeloni, Metzler and Sieradski
\cite{metzlerbook}, Beyl and Waller \cite{Beyl,BeylWaller}, Dyer and
Sieradski \cite{Dyer2,Dyer1}, Jensen \cite{jensenthesis} and Johnson
\cite{Johnson}). If $K$ is a $2$--complex then it is not difficult to
see that the Euler-characteristic $\chi(K)$ is bounded from below by
$\sum_{i=0}^2 (-1)^{i}\textrm{dim} H_i(G,\mathbb{Q})$, a constant that
only depends on the homology of $G$. Thus we can define
$\chi_{\min}(G)$ to be the minimal Euler-characteristic that can occur
for a finite 2--complex with fundamental group $G$. If $G$ is a group
of finite geometric dimension 2, that is $G$ is the fundamental group
of a finite aspherical 2--complex $K$, then $\chi_{min}(G)=\chi(K)$
and $K$ is, up to homotopy, the unique 2--complex on the minimal
Euler-characteristic level. We show that if $G$ is of geometric
dimension 2 and admits a stably free non-free relation module of rank
$k$, then there are at least two homotopically distinct 2--complexes
with the same Euler-characteristic, $\chi_{min}(G)+k$
(\fullref{cor4_2} and \fullref{thm4_3}). We use this to construct an
infinite collection of homotopically distinct 2--complexes for the
trefoil group, all of Euler characteristic one (\fullref{thm4_4}).
This provides an affirmative answer to a question raised by Berridge
and Dunwoody \cite{BerridgeDunwoody} (see also Lustig \cite{Lustig}
for closely related results).

These topological applications rely on the existence of
non-free relation modules for groups of geometric dimension 2. Let
$G$ be a group and ${\bf x}$ be a generating set for $G$. Let $F$
be the free group with basis in one-to-one correspondence to ${\bf
x}$. The kernel of the canonical map $F\rightarrow G$ is denoted
by $R(G,{\bf x})$ and is called the relation subgroup associated
with ${\bf x}$. If we abelianize $R=R(G,{\bf x})$ we obtain a
$\mathbb Z G$--module $M(G,{\bf x})=R/[R,R]$,where the $G$--action
is given by conjugation. This module is called the relation module
associated with ${\bf x}$.

Consider the group $G$ presented by $\langle x, y  \ |\
xy^2x^{-1}=y^3 \rangle$. Observe that the elements $x$ and $z=y^4$
also generate $G$. Indeed, since $xzx^{-1}=y^6$, the element $y^2$
is in $\langle x,z \rangle$. Since $xy^2x^{-1}=y^3$, we see that
$y^3$ is in $\langle x,z \rangle$ and hence so is $y$.

Graham Higman observed (see Lyndon and Schupp \cite[page
93]{LyndonSchupp}) that the generating set consisting of $x$ and $z$
does not support a 1--relator presentation for $G$. We prove a stronger
result.

\begin{thm}\label{thm1_1} Let $G$ be the group defined by $\langle x, y \ |\
xy^2x^{-1}=y^3 \rangle$ and let $z=y^4$. Then the relation module
$M(G,\{x,z\})$ cannot be generated by a single element.
\end{thm}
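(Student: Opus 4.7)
The plan is to suppose, for contradiction, that $M := M(G, \{x, z\})$ is generated as a $\mathbb{Z}G$--module by a single element $[w]$, where $w \in R := R(G, \{x, z\})$.

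I would first reduce the cyclicity hypothesis to freeness of rank one. Since $G$ is a torsion-free one-relator group, it is locally indicable (Brodskii), and hence $\mathbb{Z}G$ is a domain (Higman). Under the injective Fox embedding $M \hookrightarrow (\mathbb{Z}G)^2$ sending $[r]$ to $(\partial r/\partial x, \partial r/\partial z)^{\varphi}$, the generator $[w]$ corresponds to a nonzero pair $(a,b)$. Its annihilator in $\mathbb{Z}G$ consists of the $g$ with $ga = gb = 0$, which must vanish in a domain. Therefore $\mathbb{Z}G \to M$, $g \mapsto g[w]$, is an isomorphism, and the task becomes showing that $M$ is not free of rank one.

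Next I would establish that $M$ is stably free of rank one. Since the relator $xy^2 x^{-1} y^{-3}$ is cyclically reduced and not a proper power, Lyndon's theorem yields $M(G, \{x, y\}) \cong \mathbb{Z}G$. Adjoining a redundant generator introduces a free summand $\mathbb{Z}G$ in the relation module (via the standard Fox-calculus splitting, writing the new generator as the combination of the old ones given by its Fox derivatives). Applying this both to $\{x, y\}$ (adding $z = y^4$) and to $\{x, z\}$ (adding $y$) gives
$$M \oplus \mathbb{Z}G \cong M(G, \{x, y, z\}) \cong M(G, \{x, y\}) \oplus \mathbb{Z}G \cong (\mathbb{Z}G)^2.$$

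The heart of the proof lies in exhibiting a non-trivial obstruction to $M \cong \mathbb{Z}G$. The stable isomorphism above is realised by an invertible matrix $A \in GL_2(\mathbb{Z}G)$, computable explicitly via Fox calculus applied to the Tietze move between $\{x, y\}$ and $\{x, z\}$. To detect non-freeness, I would push $A$ forward to the metabelian quotient $G_{\mathrm{met}} = G/G'' \cong \mathbb{Z}[\tfrac{1}{6}] \rtimes \mathbb{Z}$, where the generator of $\mathbb{Z}$ acts by multiplication by $3/2$. The Dieudonn\'e determinant of the image of $A$ in $GL_2(\mathbb{Z}[G_{\mathrm{met}}])$ provides the invariant: the skew Laurent structure admits units beyond $\{\pm t^n\}$ whose evaluation on $\det A$ is non-trivial, yielding the desired contradiction.

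The main obstacle is the metabelian computation. Abelianising to $\mathbb{Z}[G^{\mathrm{ab}}] = \mathbb{Z}[t^{\pm 1}]$ is too coarse, because its units $\{\pm t^n\}$ absorb any $2 \times 2$ determinant; hence the metabelian refinement is unavoidable, and carrying out the Fox-calculus tracking of $A$ through the non-commutative multiplication $xy^2 = y^3 x$ is the technical heart of the argument that converts Higman's observation about the absence of a single-relator presentation into the stronger statement at the level of the relation module.
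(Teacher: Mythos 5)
Your first two steps are fine: the reduction from ``generated by one element'' to ``free of rank one'' via the Fox embedding and the fact that $\mathbb Z G$ is a domain is correct, and the stable isomorphism $M\oplus\mathbb Z G\cong\mathbb Z G^2$ is exactly the paper's \fullref{cor1_2}, obtained the same way from Lyndon's asphericity of $\langle x,y,z\ |\ xy^2x^{-1}=y^3,\ z=y^4\rangle$. But the entire content of the theorem sits in your third step, and there you have only named an invariant, not computed it. ``The skew Laurent structure admits units beyond $\{\pm t^n\}$ whose evaluation on $\det A$ is non-trivial'' is an assertion of the conclusion, not an argument: you never write down $A$, never identify the units that are supposed to detect it, and never explain why the value obtained is not absorbed by the ambiguity in the Dieudonn\'e determinant. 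The framework itself is doubtful: the group ring of the metabelian quotient $\mathbb Z[\tfrac16]\rtimes\mathbb Z$ is an Ore domain, and over its division ring of fractions $D$ every unimodular row of length two is completable, so a determinant computed in $D^*/[D^*,D^*]$ does not by itself obstruct completability over $\mathbb Z G$; you would have to show that the determinant of any hypothetical completion over $\mathbb Z G$ is confined to a proper subset of $D^*/[D^*,D^*]$ and that the actual row escapes it, and none of that is in the sketch. Note also that $\langle x,y\ |\ xy^2x^{-1}=y^3\rangle$ is non-Hopfian and not residually finite, so the finite-quotient determinant tricks available for the trefoil group have no analogue here.

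The paper's proof is entirely different and avoids $K$--theoretic invariants. It identifies $M(G,\{x,z\})$ with $M(H,\{z_i\}_{i\in\mathbb Z})$, where $H$ is the normal closure of $z$ and $z_i=x^izx^{-i}$, and realizes this module inside $\mathbb Z H[x^{\pm1}]$ via the Cayley graph; a length argument using the absence of zero divisors forces a single generator to be supported on $e_0,e_1$, so $M$ would be generated by relators that are words in $z$ and $xzx^{-1}$ alone (\fullref{lemma3_2}). By \fullref{prop3_1} the kernel of $\langle x,z\ |\ {\bf s}(z,xzx^{-1})\rangle\rightarrow G$ would then be perfect, but \fullref{lemma2_2} shows it cannot be: adjoining the visibly valid relations $[z_i,z_{i+1}]$ and $z_i^{-3}z_{i+1}^2$ collapses any such presentation onto $\langle x,z\ |\ z=[x,z]^2\rangle$, whose kernel over $G$ is non-trivial and free by Bass--Serre theory applied to $\langle u_i\ |\ u_{i+1}^2=u_i^3\rangle$ (\fullref{lemma2_1}), hence not perfect. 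As it stands your proposal does not prove the theorem; the decisive computation is missing.
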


\begin{corollary}\label{cor1_2} The relation module $M=M(G,\{ x, z \})$ is
stably free non-free of rank one: $M\oplus \mathbb Z G \approx
\mathbb Z G^2$.
\end{corollary}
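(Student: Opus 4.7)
The plan is to deduce the corollary from \fullref{thm1_1} together with the standard one-relator fact that the relation module of $G$ with respect to its original generating set $\{x,y\}$ is free of rank one, via a direct Schanuel argument.

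First I would invoke Lyndon's theorem for one-relator groups: the defining relator $r = xy^2x^{-1}y^{-3}$ is not a proper power in the free group on $\{x,y\}$, so the relation module $M(G,\{x,y\}) = R(G,\{x,y\})/[R,R]$ is free of rank one as a $\mathbb{Z}G$-module, generated by the class of $r$. Next, for any generating set ${\bf x}$ of $G$ of cardinality $n$, the standard short exact sequence coming from the presentation reads
\begin{equation*}
0 \lra M(G,{\bf x}) \lra \mathbb{Z}G^n \lra I(G) \lra 0,
\end{equation*}
where $I(G)$ is the augmentation ideal of $\mathbb{Z}G$ and the middle map sends the $i$-th basis element to $x_i - 1$. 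Applying this for the two generating sets $\{x,y\}$ and $\{x,z\}$, both of cardinality two, gives two short exact sequences with the same right-hand term $I(G)$ and the same middle term $\mathbb{Z}G^2$, with left-hand terms $M(G,\{x,y\}) \cong \mathbb{Z}G$ and $M = M(G,\{x,z\})$ respectively.

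Schanuel's lemma then yields $M \oplus \mathbb{Z}G \cong \mathbb{Z}G \oplus \mathbb{Z}G = \mathbb{Z}G^2$, so $M$ is stably free of rank one. To see that $M$ is not actually free, note that a free $\mathbb{Z}G$-module of rank one is cyclic as a $\mathbb{Z}G$-module; but \fullref{thm1_1} states precisely that $M$ cannot be generated by a single element. Hence $M$ is stably free but not free, as claimed.

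The whole argument is essentially formal once \fullref{thm1_1} is in hand; there is no serious obstacle in the corollary itself, the only substantive ingredient being Lyndon's freeness result for the relation module of a one-relator group with a relator that is not a proper power. All the difficulty has been packaged into \fullref{thm1_1}, whose proof must produce a nontrivial obstruction to one-element generation of $M(G,\{x,z\})$.
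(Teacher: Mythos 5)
Your overall strategy (stably freeness by comparing relation modules for two generating sets, non-freeness from \fullref{thm1_1}) is reasonable, and the non-freeness half of your argument is fine: since $\mathbb{Z}G$ surjects onto $\mathbb{Z}$ it has invariant basis number, so a stably free module of rank one, if free, would be free of rank one, hence cyclic, contradicting \fullref{thm1_1}. But the stably-free half contains a genuine error. Schanuel's lemma applied to the two sequences
$0 \ra \mathbb{Z}G \ra \mathbb{Z}G^2 \ra I(G) \ra 0$ and
$0 \ra M \ra \mathbb{Z}G^2 \ra I(G) \ra 0$
gives $M\oplus \mathbb{Z}G^2 \approx \mathbb{Z}G\oplus\mathbb{Z}G^2 = \mathbb{Z}G^3$, \emph{not} $M\oplus\mathbb{Z}G\approx\mathbb{Z}G^2$. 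Passing from the former to the latter requires cancelling a free summand, and the failure of exactly this kind of cancellation is the whole point of the corollary (it exhibits $M\oplus\mathbb{Z}G\approx\mathbb{Z}G\oplus\mathbb{Z}G$ with $M\not\approx\mathbb{Z}G$). So your argument proves only that $M$ is stably free and non-free; it does not establish the specific isomorphism $M\oplus\mathbb{Z}G\approx\mathbb{Z}G^2$ asserted in the statement.

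The paper gets the sharp statement by a different and more economical route: since $y$ lies in $\langle x,z\rangle$, it is a redundant generator, so adding it contributes exactly one free summand, $M(G,\{x,y,z\})\approx M(G,\{x,z\})\oplus\mathbb{Z}G$; on the other hand $\{x,y,z\}$ supports the aspherical two-relator presentation $\langle x,y,z \ \vert\ xy^2x^{-1}=y^3,\ z=y^4\rangle$, whence $M(G,\{x,y,z\})\approx\mathbb{Z}G^2$, free on the two relators. Comparing gives $M\oplus\mathbb{Z}G\approx\mathbb{Z}G^2$ directly, with no appeal to Schanuel and no cancellation. You could repair your own approach in the same spirit by using the redundant-generator isomorphism twice: $M(G,\{x,z\})\oplus\mathbb{Z}G\approx M(G,\{x,y,z\})\approx M(G,\{x,y\})\oplus\mathbb{Z}G\approx\mathbb{Z}G^2$, where the last step uses Lyndon's theorem as you intended.
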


\begin{proof} Since $y$ is a redundant generator we have that
$$M(G,\{x,y,z\})\approx M(G,\{x,z\})\oplus \mathbb Z G.$$ Since the
generating set $\{x,y,z\}$ supports the aspherical presentation
$$\langle x,y,z \vert xy^2x^{-1}=y^3, z=y^4 \rangle$$ (see
Lyndon \cite{Lyndon}) it follows that $M(G,\{x,y,z\})\approx \mathbb Z
G^2$. \end{proof}

In \cite{Dunwoody1} Martin Dunwoody shows
analogous results for the trefoil group. Later Berridge and
Dunwoody \cite{BerridgeDunwoody} show that there are infinitely
many non-isomorphic stably free non-free relation modules for the
trefoil group, all of rank one. For related results see also Lewin
\cite{Lewin}.

\section{Some combinatorial group theory}

Let $G$ be the group presented by $\langle x, y \ |\
xy^2x^{-1}=y^3 \rangle$ and let $z=y^4$.

\begin{lemma}\label{lemma2_1}
The kernel $K$ of the epimorphism $$\bar G=\langle x,z \ |\
z=[x,z]^2 \rangle \rightarrow G$$ that sends $x$ to $x$ and $z$ to
$z$ is non-trivial and free.
\end{lemma}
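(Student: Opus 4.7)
The plan is to identify $\bar G$ with the Baumslag--Solitar group $BS(2,3)$ and then invoke Bass--Serre theory. Setting $c=[x,z]$ inside $\bar G$, the defining relation $z=[x,z]^2$ becomes $z=c^2$, and substituting this back into $c=xzx^{-1}z^{-1}$ gives $xc^2x^{-1}=c^3$. Eliminating the now-redundant generator $z$ therefore yields
$$\bar G \;\cong\; \langle x,c \mid xc^2x^{-1}=c^3\rangle \;\cong\; G.$$
Under this identification, the epimorphism in the statement becomes the classical (non-injective) Baumslag--Solitar endomorphism $x\mapsto x$, $c\mapsto y^2$, so that $z=c^2$ is indeed sent to $y^4$, as required.

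For the non-triviality of $K$ I would argue by contradiction: if $\bar G \to G$ were an isomorphism, then $\langle x,z \mid z=[x,z]^2\rangle$ would be a one-relator presentation for $G$ on the generating set $\{x,z\}$, contradicting the Higman observation cited from \cite[page 93]{LyndonSchupp} immediately before \fullref{thm1_1}. Hence $K\neq\{1\}$.

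For freeness of $K$ I would exploit the HNN structure visible in $\langle x,c \mid xc^2x^{-1}=c^3\rangle$: the group $\bar G$ is the HNN extension of $\langle c\rangle\cong\mathbb Z$ with associated subgroups $\langle c^2\rangle$ and $\langle c^3\rangle$ and stable letter $x$, so $\bar G$ acts on the corresponding Bass--Serre tree $T$ with vertex stabilizers conjugate to $\langle c\rangle$. The subgroup $K$ acts freely on $T$, because for any $g\in\bar G$ and any $n\neq 0$ the image of $gc^ng^{-1}$ in $G$ under the given map is a conjugate of $y^{2n}$, which is nontrivial since $G$ is torsion-free; hence $K\cap g\langle c\rangle g^{-1}=\{1\}$ for every $g\in\bar G$. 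A group that acts freely on a tree is free (Bass--Serre / Stallings), so $K$ is free. The only real obstacle is spotting the change of generators $z\leftrightarrow c^2$ that identifies $\bar G$ with $BS(2,3)$; once this identification is in hand, non-triviality of $K$ reduces to Higman's observation and freeness of $K$ reduces to a standard tree-action argument.
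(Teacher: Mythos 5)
Your proof is correct, and it takes a genuinely different route from the paper's. Your key move---setting $c=[x,z]$, using the relation to write $z=c^2$, and eliminating $z$ to recognize $\bar G\cong\langle x,c\mid xc^2x^{-1}=c^3\rangle\cong G$, with the given epimorphism becoming the classical endomorphism $x\mapsto x$, $c\mapsto y^2$---is a global version of what the paper does only inside the normal closure $\bar H$ of $z$: there the substitution $u_i=z_{i+1}z_i^{-1}$ (which is exactly $x^icx^{-i}$) turns $\bar H$ into the infinite amalgam $\langle u_i\mid u_{i+1}^2=u_i^3\rangle$, and freeness of $K$ is extracted from the Bass--Serre tree of that amalgam precisely as you extract it from the Bass--Serre tree of the HNN decomposition; both arguments come down to the observation that $K$ meets every conjugate of an infinite cyclic vertex group trivially because $y^2$ has infinite order in $G$. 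Your identification $\bar G\cong G$ is cleaner and more illuminating (it exhibits the map as the standard witness that this Baumslag--Solitar group is non-Hopfian), at the cost of importing an external fact for non-triviality: you invoke Higman's observation from Lyndon--Schupp (you could equally invoke Baumslag and Solitar's classical proof that $x\mapsto x$, $y\mapsto y^2$ is non-injective), whereas the paper is self-contained at this point, producing the explicit kernel element $[u_0,u_1]$ and certifying its non-triviality with the Normal Form Theorem for amalgamated products. There is no circularity in your appeal to Higman, since that observation is proved independently in the cited source and the paper uses it only as motivation; still, if you want to match the paper's self-contained style, note that your setup hands you the explicit element $[c,xcx^{-1}]$ (the image of the paper's $[u_0,u_1]$), which is non-trivial in $\bar G$ by Britton's lemma (no pinches, since $c\notin\langle c^2\rangle$) and maps to $[y^2,xy^2x^{-1}]=[y^2,y^3]=1$ in $G$.
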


\begin{proof} Let $\bar H$ be the normal closure of $z$ in $\bar G$ and let
$H$ be the normal closure of $z$ in $G$. The epimorphism in the
statement of the lemma restricts to an epimorphism from $\bar H$ to
$H$ with kernel $K$. Indeed, since both $G/H$ and and $\bar G/\bar
H$ are infinite cyclic (generated by $x$), it follows that the
kernel $K\bar H/\bar H$ of the epimorphism $\bar G/\bar H
\rightarrow G/H$ is trivial and so $K$ is contained in $\bar H$. Let
$z_i=x^izx^{-i}$, $i\in \mathbb Z$. Then $\bar H$ has a presentation
$$\bar H=\langle z_i \ |\  z_i=(z_{i+1}z_i^{-1})^2 \rangle, i\in \mathbb
Z.$$ If we define $u_i=z_{i+1}z_i^{-1}$ then we obtain a
presentation
$$\bar H=\langle z_i, u_i \ |\  u_i=z_{i+1}z_i^{-1}, z_i=(z_{i+1}z_i^{-1})^2 \rangle,
i\in \mathbb Z$$ and hence via Tietze transformations
$$\bar H=\langle u_i \ |\  u_{i+1}^2=u_i^3 \rangle,
i\in \mathbb Z.$$ So we see that $\bar H$ is an amalgamated
product with infinite cyclic vertex groups $\langle u_i \rangle$
and infinite cyclic edge groups that give the relations
$u_{i+1}^2=u_i^3$. Note that the epimorphism $\bar H \rightarrow
H$ sends $u_i$ to $y_i^2=x^iy^2x^{-i}$. Since $[y^2,xy^2x^{-1}]=1$
in $G$, we see that $K$ contains the element $[u_0,u_1]$, which by
the Normal Form Theorem for amalgamated products is non-trivial.
It remains to be shown that $K$ is free. Since $\bar H$ acts on a
tree with infinite cyclic vertex stabilizers conjugate to $\langle
u_i \rangle$ and $\langle u_i\rangle K/K$ is the infinite cyclic
subgroup $\langle y_i^2 \rangle$ of $H$, it follows that $K$
intersects the conjugates of $\langle u_i \rangle$ trivially.
Hence $K$ acts freely on a tree and hence is free.\end{proof}

\begin{lemma}\label{lemma2_2}
Let ${\bf s}(z,xzx^{-1})$ be a set of relations among $z$ and
$xzx^{-1}$ that holds among the generators $x$ and $z$ of $G$. Let
$L$ be the kernel of an epimorphism $\bar G=\langle x,z \ |\  {\bf
s}(z,xzx^{-1})\rangle \rightarrow G$ that sends $x$ to $x$ and $z$
to $z$. Then $L$ is not perfect (that is $L/[L,L]\ne 0$).
\end{lemma}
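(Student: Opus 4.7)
The plan is to follow the template of \fullref{lemma2_1}: reduce to the normal closure of $z$, analyze it as an iterated amalgamated product, and study the kernel of the resulting map to $H$. Exactly as there, one checks that $L \subseteq \bar N$, where $\bar N$ is the normal closure of $z$ in $\bar G$, because setting $z=1$ kills every relation in $\mathbf{s}(z, xzx^{-1})$, so $\bar G / \bar N \cong \mathbb Z \cong G/H$. The Reidemeister--Schreier method then gives
$$\bar N = \langle z_i \ \vert\ w(z_i, z_{i+1}) = 1,\ w \in \mathbf{s},\ i \in \mathbb Z\rangle,\quad z_i = x^i z x^{-i},$$
exhibiting $\bar N$ as an iterated amalgamated product of the two-generator groups $P_i = \langle z_i, z_{i+1}\mid \mathbf{s}(z_i, z_{i+1})\rangle$ along the infinite cyclic subgroups $\langle z_i\rangle$. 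Because each $w \in \mathbf{s}$ must evaluate to $1$ in $\langle y^4, y^6\rangle \cong \mathbb Z$, one has $2|w|_a + 3|w|_b = 0$ with $a=z$, $b=xzx^{-1}$, so $|w|_a = 3k_w$ and $|w|_b = -2k_w$ for some integer $k_w$; set $d = \gcd\{k_w : w \in \mathbf{s}\}$.

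I would then split into two cases. If $d \neq 1$, the abelianization $\bar N^{ab}$ is generated by the classes $\bar z_i$ subject to $d(3\bar z_i - 2\bar z_{i+1}) = 0$; comparing the $z_0$- and $z_1$-coefficients shows that $3\bar z_0 - 2\bar z_1$ is nontrivial in $\bar N^{ab}$, since any expression of it in terms of the relators would force $d$ to divide both $3$ and $2$. On the other hand, $3y_i = 2y_{i+1}$ in $H^{ab} \cong \mathbb Z[1/6]$, so this class maps to zero in $H^{ab}$. The explicit element $z^3(xzx^{-1})^{-2}$ thus lies in $L$ but not in $[\bar N, \bar N]$; since $[L,L] \subseteq [\bar N, \bar N]$, it also lies outside $[L,L]$, and $L$ is not perfect.

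The harder case is $d = 1$, in which $\bar N^{ab} \to H^{ab}$ is an isomorphism and abelianization alone cannot detect $L$. Here $P := \langle a, b \mid \mathbf{s}\rangle$ has $P^{ab} \cong \mathbb Z$, generated by an element $c$ with $a = c^2$ and $b = c^3$. Because the image of each $P_i$ in $H$ is the cyclic group $\langle y_i^2\rangle$, the map $P_i \to H$ factors through $P_i^{ab}$, so each $[P_i, P_i]$ is contained in $L$. I would then pass to the quotient $T = \bar N / \langle\langle \bigcup_i [P_i, P_i]\rangle\rangle$: this abelianizes every vertex group, and the amalgamation relations become $c_{i-1}^3 = c_i^2$, so $T$ coincides with the group $\bar H$ of \fullref{lemma2_1}, with the factored map $T \to H$ matching the one studied there. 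Since \fullref{lemma2_1} produces a nontrivial (free) kernel $K$ of $\bar H \to H$, and $L$ surjects onto $K$ under this identification, $L^{ab}$ surjects onto $K^{ab} \ne 0$. The main obstacle is locating this quotient: recognizing that collapsing the individual vertex commutator subgroups reduces the problem exactly to the cyclic-amalgam situation already handled in \fullref{lemma2_1}.
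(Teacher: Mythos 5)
Your argument is correct, and its overall architecture is the paper's: exhibit a quotient of $L$ that is identified with the kernel $K$ of \fullref{lemma2_1}, and conclude because a non-trivial free group is not perfect. Where you diverge is in how that quotient is manufactured. The paper adjoins to the relators ${\bf s_i}$ the two explicit elements $c_i=[z_i,z_{i+1}]$ and $d_i=z_i^{-3}z_{i+1}^2$ of $J$ and shows, uniformly in ${\bf s}$, that the resulting normal closure equals that of the single relator $z_i^{-1}(z_{i+1}z_i^{-1})^2$ from \fullref{lemma2_1}; the engine is the observation that $\langle z_i,z_{i+1} \ |\ {\bf s_i}, c_i, d_i\rangle$ is sandwiched between two infinite cyclic groups and is therefore infinite cyclic. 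You adjoin only the commutators of the vertex groups $P_i$, which recovers the relation $z_{i+1}^2=z_i^3$ only when the exponent-sum gcd $d$ equals $1$; this is what forces your case split, and in the case $d\ne 1$ you dispose of the problem by a self-contained abelianization argument ($3\bar z_0-2\bar z_1$ survives in $\bar N^{\rm ab}$ but $z^3(xzx^{-1})^{-2}$ dies in $G$) that bypasses \fullref{lemma2_1} entirely. Both branches check out: the coefficient comparison for $d\ne1$ is sound (and silently covers $d=0$ as well), and for $d=1$ your Tietze computation identifying $\bar N/\langle\langle\bigcup_i[P_i,P_i]\rangle\rangle$ with $\langle u_i \ |\ u_{i+1}^2=u_i^3\rangle=\bar H$, compatibly with the maps to $H$, is exactly what is needed to pull back non-perfectness from $K$. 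Two minor remarks: your passing claim that $\bar N$ is an iterated amalgamated product of the $P_i$ over infinite cyclic edge groups would need a Freiheitssatz-type justification for the staggered presentation, but you never use it, so nothing is lost; and the trade-off between the two routes is that the paper's normal-closure identity buys a uniform, case-free argument, while your split isolates the degenerate presentations ($d\ne1$) and kills them by elementary homology alone.
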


\begin{proof} Let $\bar H$ and $H$ be the normal closures of $z$ in $\bar
G$ and $G$, respectively. As in the previous lemma $L$ is also in the
kernel of the restriction of the epimorphism to $\bar H$. The
group $\bar H$ has a staggered presentation $$\bar H=\langle z_i \
|\  {\bf s_i}(z_i,z_{i+1})\rangle, i\in \mathbb Z$$ where ${\bf
s_i}={\bf s_i}(z_i,z_{i+1})$ is a set of relations that hold among
the two elements $z_i, z_{i+1}$ of $H$. Let $F$ be the free group
on the $z_i$, $i\in \mathbb Z$, and let $S$ be the normal closure
of the $\bigcup_{i\in\mathbb Z}{\bf s_i}$, in $F$. Then $\bar
H=F/S$ and $L=J/S$ for some normal subgroup $J$ of $F$. Notice
that $H=F/J$ and that $J$ contains the elements
$c_i=[z_i,z_{i+1}]$ and $d_i=z_{i}^{-3}z_{i+1}^2$, $i\in \mathbb
Z$ (because they present the trivial element in $H$). We claim
that the normal closure of the set $\bigcup_{i\in \mathbb Z}{\bf
s_i}\cup\{c_i, d_i, i\in \mathbb Z \}$ in $F$ is the same as the
normal closure of $\{ e_i=z_i^{-1}(z_{i+1}z_i^{-1})^2$, $i\in
\mathbb Z$\} in $F$. To see this it suffices to show that for
every fixed $i\in \mathbb Z$ we have
$$^{F(z_i,z_{i+1})}\langle\langle {\bf s_i}, c_i,
d_i\rangle\rangle= {^{F(z_i,z_{i+1})}\langle\langle} e_i
\rangle\rangle.$$ Clearly the right hand side is contained in the
left hand side because $e_i$ is a product of conjugates of $c_i$ and
$d_i$ (if we are allowed to commute $z_i$ and $z_{i+1}$ then we can
turn $e_i$ into $d_i$). In order to show the other inclusion
consider the epimorphism
$$\langle z_i, z_{i+1} \ |\  e_i \rangle \rightarrow \langle z_i,
z_{i+1} \ |\  {\bf s_i}, c_i, d_i \rangle.$$ We will show that
both groups are infinite cyclic. Hence this epimorphism is an
isomorphism, and that settles the claim. Note first that the group
on the left is infinite cyclic, generated by $z_{i+1}z_i^{-1}$.
Let us consider the group on the right. Notice that the relations
${\bf s_i}$, $c_i$ and $d_i$ hold in $H$ (i.e. these elements are
contained in $J$), ${\bf s_i}$ by hypothesis and $c_i$, $d_i$ by
direct inspection. Thus we have an epimorphism
$$\langle z_i, z_{i+1} \ |\  {\bf s_i}, c_i, d_i \rangle\rightarrow \langle
z_i, z_{i+1}\rangle = H_i$$ onto the subgroup $H_i$ of $H$
generated by $z_i=y_i^4=x^iy^4x^{-i}$,
$z_{i+1}=y_{i+1}^4=x^{i+1}y^4x^{-(i+1)}$. Since in $H$ we have
$y_{i+1}^4=y_i^6$, we see that $\langle z_i,
z_{i+1}\rangle=\langle y_i^4, y_i^6\rangle=\langle y_i^2\rangle$,
which is an infinite cyclic subgroup of $H$. Thus $\langle z_i,
z_{i+1} \ |\  {\bf s_i}, c_i, d_i \rangle$ is the image of an
infinite cyclic group and has an infinite cyclic image. Hence it is
infinite cyclic. This settles the claim.

Let $E$ be the normal closure of $\{ e_i$, $i\in \mathbb Z \}$ in
$F$ and let $K=J/E$. We have just shown that $S\subseteq E$, so
$K$ is a homomorphic image of $L=J/S$. So if we assume that $L$ is
perfect, we conclude that $K$ is perfect as well. But according to
the previous \fullref{lemma2_1} the group $K$ is non-trivial and free. So
$L$ can not be perfect. \end{proof}

\section{Some module theory}

\begin{proposition}\label{prop3_1} Suppose $G$ is a group and $M(G,{\bf x})$ is the
relation module associated with some generating set ${\bf x}$.
Suppose furthermore that ${\bf s}$ is a subset of $R=R(G,{\bf x})$
that gives a set of generators for the $\mathbb Z G$--module
$M(G,{\bf x})$. Let $\tilde G$ be the group defined by the
presentation $\langle {\bf x} \ |\  {\bf s} \rangle$. Then the
kernel $P$ of the natural surjection from $\tilde G$ onto $G$ is
perfect.
\end{proposition}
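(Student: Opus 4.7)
The plan is to compute the abelianization $P/[P,P]$ directly and show it vanishes. Let $F$ be the free group on $\mathbf{x}$, so that $R=R(G,\mathbf{x})$ is the kernel of $F\to G$, and let $N$ denote the normal closure of $\mathbf{s}$ in $F$, so that $\tilde G=F/N$. Since $\mathbf{s}\subseteq R$ and $R$ is normal in $F$, the subgroup $N$ is contained in $R$, and $P$ is exactly $R/N$.

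The first step will be to rewrite
\[
P/[P,P]=R\bigm/\bigl(N\cdot[R,R]\bigr),
\]
so the whole task reduces to proving that $N\cdot[R,R]=R$, or equivalently that the image of $N$ in the relation module $M=R/[R,R]$ is all of $M$.

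The key step, and the one place where anything of substance happens, is the identification of the image $\bar N$ of $N$ in $M$ with the $\mathbb Z G$--submodule generated by $\mathbf{s}$. By definition $N$ is spanned as a group by $F$--conjugates of elements of $\mathbf{s}$; once one passes to $R/[R,R]$ the inner action of $R$ on itself becomes trivial, so the $F$--conjugation action descends to the $F/R=G$--action that defines the relation module. Thus $\bar N$ is exactly the $\mathbb Z G$--submodule of $M$ generated by the image of $\mathbf{s}$. By hypothesis this submodule is all of $M$, so $N\cdot[R,R]=R$ and $P/[P,P]=0$.

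I do not expect any real obstacle here. The only point that needs care is the translation between the $F$--normal-closure operation on the group level and the $\mathbb Z G$--submodule operation on $R/[R,R]$; once this translation is made, perfectness of $P$ follows immediately from the assumption that $\mathbf{s}$ generates $M(G,\mathbf{x})$ as a $\mathbb Z G$--module.
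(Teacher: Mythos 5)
Your argument is correct and is essentially the paper's own proof: the paper likewise sets $S$ (your $N$) to be the normal closure of $\mathbf{s}$ in $F$, observes that $\mathbf{s}$ generating the relation module means $R=S[R,R]$, and concludes $P/[P,P]=R/S[R,R]=0$. The translation you highlight between the $F$--normal closure and the $\mathbb Z G$--submodule generated by $\mathbf{s}$ is exactly the step the paper uses, just stated more tersely there.
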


\begin{proof} Let $F$ be the free group on ${\bf x}$ and let $S$ be the
normal closure of ${\bf s}$ in $F$. Since $s[R,R]$, $s\in {\bf
s}$, generates the relation module we have $R=S[R,R]$. Since
$\tilde G=F/S$ and $G=F/R$, the kernel of the map $\tilde G
\rightarrow G$ is $P=R/S$. Thus $P/[P,P]=R/S[R,R]=0$. \end{proof}

Let $G$ be a group, $F=F({\bf a}\cup{\bf b})$ be a
free group on the union of sets ${\bf a}$ and ${\bf b}$ and let
$\pi\co  F({\bf a}\cup{\bf b})\rightarrow G$ be a group epimorphism.
Let $R=R(G,{\bf a}\cup {\bf b})$ be the kernel of $\pi$. Assume
that $Q=\pi(F({\bf b}))$ is a free group on basis ${\bf b}$ and
let $H$ be the normal closure of $\pi(F({\bf a}))$ in $G$. Note
that $G$ is a semi-direct product $H\rtimes Q$. Let ${\bf {\bar
a}}=\{ faf^{-1} \ |\  f\in F, a\in{\bf a}\}$. Then $F({\bf {\bar
a}})$, the free group on ${\bf {\bar a}}$, is the normal closure
of ${\bf a}$ in $F$. Let $\pi'\co  F({\bf {\bar a}})\rightarrow H$ be
the restriction of $\pi$ and let $S=S(H, {\bf{\bar a}})$ be the
kernel of $\pi'$. Note that since $R\subseteq F({\bf{\bar a}})$ we
have $S=R$. In particular $M(H,{\bf {\bar a}})=M(G,{\bf a}\cup{\bf
b})$ as $\mathbb Z G$--modules, where the $G$--action on $M(H,{\bf
{\bar a}})$ is conjugation: $fR*s[S,S]=fsf^{-1}[S,S]$, $f\in F$,
$s\in S$.

Let ${\bf x}$ be a generating set for the group $G$. The Cayley-graph
$\Gamma(G, {\bf x})$ is a graph with vertex set $G$ and edge set
$G\times {\bf x}$. The initial vertex of the edge $(g,x)$ is $g$, the
terminal vertex is the product $gx$. The group $G$ acts on this graph
via left multiplication and induces $\mathbb Z G$--module structures on
the homology groups. Every element of $R=R(G,{\bf x})$ can be lifted
to a closed edge path in the Cayley-graph and this construction is the
basis for the Fox-derivative ${\cal F}\co M(G,{\bf x})\rightarrow
H_1(\Gamma(G,{\bf x}))$, ${\cal F}(r[R,R])=\sum_{x\in {\bf
x}}\frac{\partial r}{\partial x}e_x$, where $e_x$ denotes the edge
$(1,x)$. The Fox-derivative can be shown to be a $\mathbb Z G$--module
isomorphism between the relation module and the first homology of the
Cayley-graph (see \cite[Chapter II, Section 3]{LyndonSchupp}).

Let us now specialize to the situation where $G=\langle x,y \ |\
xy^2x^{-1}=y^3 \rangle$. Let $z=y^4$, ${\bf a}=\{ z \}$, ${\bf
b}=\{ x \}$. Then ${\bf {\bar a}}=\{ z_i \ |\  i\in \mathbb Z \}$,
where $z_i=x^izx^{-i}$. We have $M(G,\{x,z\})=M(H,\{z_i\}_{i\in
\mathbb Z})$ as $\mathbb Z G$--modules. Now $$H_1(\Gamma(H,\{
z_i\}_{i\in \mathbb Z}))=\textrm{ker} (\bigoplus_{i\in \mathbb Z}\mathbb Z
He_i\stackrel{\partial}{\rightarrow}\mathbb Z H),$$ where $e_i$ is
the edge $(1,z_i)$ and hence $\partial(e_i)=z_i-1$. Since
$G=H\rtimes \langle x \rangle$, the group ring $\mathbb Z G$ is a
skewed Laurent-polynomial ring $\mathbb Z H[x^{\pm 1}]$.
Furthermore $\bigoplus_{i\in \mathbb Z}\mathbb Z He_i$ is
isomorphic to $\mathbb Z G=\mathbb Z H[x^{\pm 1}]$, the
isomorphism sending $e_i$ to $x^i$ and the map $\partial\co 
\bigoplus_{i\in \mathbb Z}\mathbb Z He_i\rightarrow \mathbb Z H$,
$\partial(x^i)=z_i-1$, is a $\mathbb Z G$--module homomorphism (the
$G$--action on $\mathbb Z H$ is $hx^i*h'=hx^ih'x^{-i}$). In
particular $H_1(\Gamma(H,\{z_i\}_{i\in\mathbb Z}))$ is a $\mathbb
Z G$--module and the Fox-derivative ${\cal F}\co  M(H,\{z_i\}_{i\in
\mathbb Z})\rightarrow H_1(\Gamma(H,\{z_i\}_{i\in \mathbb Z}))$ is
a $\mathbb Z G$--module isomorphism.

If $\alpha=\alpha_jx^j+\ldots+\alpha_{j+n}x^{j+n}\in ZH[x^{\pm 1}]$,
$\alpha_j$, $\alpha_{j+n}$ both not zero, then we call $n$ the length
of $\alpha$, $n=l(\alpha)$. Note that if $\alpha=\beta \gamma$ then
$l(\alpha)=l(\beta)+l(\gamma)$. This uses the fact that, because $G$
is a torsion-free 1--relator group, the group ring $\mathbb Z G$ has no
zero divisors (see Brodskii \cite{Brodskii}, Howie \cite{Howie1,Howie2}).

\begin{lemma}\label{lemma3_2} 
Suppose $\alpha=\alpha_0e_0+\ldots+\alpha_ne_n$ is an element of
$H_1(\Gamma(H,\{z_i\}_{i\in\mathbb Z}))\subseteq \bigoplus_{i\in
\mathbb Z}\mathbb Z He_i$. Then there exist elements
$s_1,\ldots,s_m\in R=R(H,\{z_i\}_{i\in\mathbb Z})$ and\break $f_1,\ldots,f_m
\in F(\{z_i\}_{i\in\mathbb Z})$ such that each $s_j$, $j=1,\ldots,m$,
is a word in the letters $z_0,\ldots,z_n$ and ${\cal F}(\prod_{k=0}^m
f_ks_kf_k^{-1}[R,R])=\alpha$. \end{lemma}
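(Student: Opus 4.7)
The plan is to realise $\alpha$ as a $1$--cycle in the subgraph $\Gamma_n\subseteq \Gamma(H,\{z_i\}_{i\in\mathbb Z})$ that uses only the generators $z_0,\ldots,z_n$, decompose $\alpha$ according to the connected components of $\Gamma_n$, and lift each piece via the Fox derivative for the subgroup $H_{0,n}=\langle z_0,\ldots,z_n\rangle\le H$ with generating set $\{z_0,\ldots,z_n\}$.

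First observe that because $\alpha$ has no $e_i$--component for $i\notin\{0,\ldots,n\}$, it lies in the kernel of the restricted boundary $\bigoplus_{i=0}^{n}\mathbb Z H e_i\to \mathbb Z H$; that is, $\alpha\in H_1(\Gamma_n)$. The key geometric fact is that the connected components of $\Gamma_n$ are precisely the left cosets $hH_{0,n}$: the edge $(h',z_i)$ connects $h'$ to $h'z_i$, and both endpoints lie in the same coset since $z_i\in H_{0,n}$. The identity component $\Gamma_n^{(1)}$ is then naturally identified with the Cayley graph $\Gamma(H_{0,n},\{z_0,\ldots,z_n\})$. Setting $R_n=R\cap F(z_0,\ldots,z_n)$, the Fox derivative applied inside the subgroup gives an isomorphism $\mathcal F_n\colon R_n/[R_n,R_n]\to H_1(\Gamma_n^{(1)})$ whose formula coincides with that of $\mathcal F$ on elements of $R_n$, so the triangle involving $\mathcal F_n$, $\mathcal F$, and the inclusion $H_1(\Gamma_n^{(1)})\hookrightarrow H_1(\Gamma(H,\{z_i\}))$ commutes.

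Because the coefficients $\alpha_0,\ldots,\alpha_n\in\mathbb Z H$ have finite support, only finitely many cosets $h_1H_{0,n},\ldots,h_mH_{0,n}$ meet the support of $\alpha$. Writing $\alpha=\sum_{k=1}^m\alpha^{(k)}$ where $\alpha^{(k)}$ is the piece of $\alpha$ supported on the $h_k$--component, and setting $\beta_k=h_k^{-1}\cdot\alpha^{(k)}\in H_1(\Gamma_n^{(1)})$, we have $\alpha=\sum_{k=1}^{m}h_k\cdot\beta_k$. Applying $\mathcal F_n^{-1}$ yields $s_k\in R_n$ with $\mathcal F(s_k[R,R])=\beta_k$, and by construction each $s_k$ is a word in $z_0,\ldots,z_n$ only. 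Choosing $f_k\in F(\{z_i\}_{i\in\mathbb Z})$ with $\pi(f_k)=h_k$ and using that $\mathcal F$ is a $\mathbb Z G$--module homomorphism realising conjugation in $F$ as the module action, one obtains $\mathcal F(f_ks_kf_k^{-1}[R,R])=h_k\cdot\beta_k$. Summing over $k$, additivity of $\mathcal F$ under multiplication in $R$ gives $\mathcal F\bigl(\prod_{k=1}^{m}f_ks_kf_k^{-1}[R,R]\bigr)=\alpha$, as required.

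The one non-routine ingredient is the explicit component analysis of $\Gamma_n$ together with the identification $H_1(\Gamma_n^{(1)})\cong R_n/[R_n,R_n]$ via the Fox derivative for the subgroup $H_{0,n}$, and the compatibility check between $\mathcal F_n$ and the ambient $\mathcal F$. Once these are in place, the remainder is additivity of the Fox derivative under concatenation and bookkeeping with the $G$--action.
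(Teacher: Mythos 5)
Your argument is correct and is essentially the paper's proof in more algebraic clothing: the paper arranges the edges supporting $\alpha$ into closed edge paths $P_1,\ldots,P_m$ lying in the subgraph of edges labelled $z_0,\ldots,z_n$ and joins each to the basepoint by a path $Q_i$ (your $f_i$), whereas you organize the same data by the connected components of that subgraph (the cosets of $\langle z_0,\ldots,z_n\rangle$) and invoke the Fox-derivative isomorphism for the subgroup in place of reading off edge labels. Both proofs hinge on the same fact---that a $1$--cycle supported on these edges is realized by closed loops inside the subgraph---so the difference is one of presentation only.
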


\begin{proof} The cycle $\alpha$ is a sum of edges in
$\Gamma(H,\{z_i\}_{i\in\mathbb Z})$ with edge labels involving only
letters from  $\{ z_0,\ldots,z_n \}$. These edges can be arranged to
form closed edge paths $P_1,\ldots,P_m$. Choose paths
$Q_1,\ldots,Q_m$ such that $Q_i$ connects the vertex $1$ to a vertex
occurring in $P_i$. Reading off the edge labels on the path
$Q_iP_iQ_i^{-1}$, $i=1,\ldots,m$, gives a word of the form
$f_is_if_i^{-1}$, where $s_i\in R$ involves only letters from $\{
z_0,\ldots,z_n \}$, and ${\cal F}(\prod_{k=0}^m
f_ks_kf_k^{-1}[R,R])=\alpha$. \end{proof}

\begin{proof}[Proof of \fullref{thm1_1}] Suppose $M(G,\{x,z\})$
is generated (as $\mathbb Z G$--module) by a single element. Then so
is $M(H,\{z_i\}_{i\in\mathbb Z})$. Hence
$H_1(\Gamma(H,\{z_i\}_{i\in\mathbb Z}))$ is also singly generated,
say by $\alpha$. Note that $z_1^2z_0^{-3} \in R(H, \{z_i\}_{i\in
\mathbb Z})$, so $\beta={\cal F}(z_1^2z_0^{-3}[R,R])$ is an element
of length one in $H_1(\Gamma(H,\{z_i\}_{i\in\mathbb Z}))$ and
$\beta=\gamma\alpha$ for some $\gamma\in \mathbb Z G=\mathbb Z
H[x^{\pm 1}]$. Since $l(\beta)=l(\gamma)+l(\alpha)$, we conclude
that the length of $\alpha$ is less or equal to one, so we may
assume $\alpha=\alpha_0e_0+\alpha_1e_1$. It follows from \fullref{lemma3_2} that there are elements
$s_1(z_0,z_1)$,\ldots,$s_m(z_0,z_1)$ in $F(z_0,z_1)$ that give rise
to $\mathbb Z G$--module generators for $M(H,\{z_i\}_{i\in\mathbb
Z})$. Thus the set ${\bf s}(z, xzx^{-1})=\{ s_0(z,
xzx^{-1}),\ldots,s_m(z, xzx^{-1})\}$ generates the $\mathbb Z
G$--module $M(G, \{ x, z \})$. \fullref{prop3_1} implies that
the kernel of the map $$\bar G=\langle x, z \ |\ {\bf s}(z,xzx^{-1})
\rangle \rightarrow G$$ that sends $x$ to $x$ and $z$ to $z$ is
perfect. This contradicts \fullref{lemma2_2}. \end{proof}

\section{Topological applications}

In the last section we have seen that the relation module
$M=M(G,\{x,y^4\})$ for the group $G=\langle x, y \ |\
xy^2x^{-1}=y^3\rangle$ is not generated by a single element, hence
is certainly not isomorphic to $\mathbb Z G$. In this section we
will use this fact to exhibit 2--complexes with fundamental group $G$
and the same Euler-characteristic that are not homotopically
equivalent.

Let $X$ be a CW--complex which is the union of a family of non-empty
subcomplexes $X_{\alpha}$, where $\alpha$ ranges over some index set
$J$. Let $\cal{N}$ be the nerve associated with this covering of
$X$. The nerve $\cal{N}$ is a simplicial complex with vertex set
$J$. The $n$--simplices are subsets $\{ \alpha_1,...,\alpha_n \}$,
$\alpha_i\in J$, $i=1,...,n$, such that the intersection
$X_{\alpha_1}\cap ... \cap X_{\alpha_n}$ is not empty.

The following result is an immediate consequence of the
Mayer--Vietoris spectral sequence (see Brown \cite[page 166]{Brown}).

\begin{lemma}\label{lemma4 1} If ${\cal N}$ is a tree we have a
long exact sequence
$$...\rightarrow H_{p+1}(X)\rightarrow \bigoplus_{\{\alpha, \alpha'\}\in
{\cal N}^{(1)}}H_p(X_{\alpha}\cap X_{\alpha'})\rightarrow
\bigoplus_{\alpha\in {\cal N}^{(0)}}H_p(X_{\alpha})\rightarrow
H_p(X)\rightarrow ...$$
\end{lemma}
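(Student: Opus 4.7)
The plan is to invoke the Mayer--Vietoris spectral sequence associated with the covering $\{X_\alpha\}_{\alpha \in J}$, as referenced in Brown's book. This spectral sequence has $E_1$-page $E_1^{p,q} = \bigoplus_{\sigma \in \mathcal{N}^{(p)}} H_q(X_\sigma)$ converging to $H_{p+q}(X)$, where $X_\sigma$ denotes the intersection of the $X_\alpha$ over the vertices of the simplex $\sigma$; its $d_1$-differential on the $p=1$ column is the signed sum of the maps induced by the inclusions $X_\alpha\cap X_{\alpha'}\to X_\alpha$ and $X_\alpha\cap X_{\alpha'}\to X_{\alpha'}$ after one fixes an orientation of each $1$--simplex of $\mathcal{N}$.

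Since $\mathcal{N}$ is a tree it has dimension at most $1$, so $E_1^{p,q}=0$ for all $p\ge 2$. The spectral sequence therefore has only two nonzero columns, $p=0$ and $p=1$, connected by $d_1\colon E_1^{1,q}\to E_1^{0,q}$, and all higher differentials vanish for dimensional reasons. Thus $E_2=E_\infty$ with $E_\infty^{0,q}=\textrm{coker}(d_1)$ and $E_\infty^{1,q}=\ker(d_1)$.

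Because only two columns contribute, the abutment filtration on $H_n(X)$ has length at most two, so $E_\infty$ assembles into short exact sequences
$$0\to \textrm{coker}\bigl(d_1\colon E_1^{1,n}\to E_1^{0,n}\bigr)\to H_n(X)\to \ker\bigl(d_1\colon E_1^{1,n-1}\to E_1^{0,n-1}\bigr)\to 0.$$
Splicing these along the tautological inclusion $\ker(d_1)\to E_1^{1,\bullet}$ and the surjection $E_1^{0,\bullet}\to \textrm{coker}(d_1)$ produces exactly the long exact sequence claimed.

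The whole argument beyond invoking the spectral sequence is the standard two-column degeneration, which is purely formal. I expect the only point deserving care to be the identification of $d_1$ with the usual Mayer--Vietoris difference-of-restrictions map, up to a sign convention; this is a bookkeeping matter handled by the reference to Brown rather than a substantive obstacle.
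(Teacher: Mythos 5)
Your argument is exactly what the paper intends: the paper offers no proof of this lemma beyond citing the Mayer--Vietoris spectral sequence in Brown, and your two-column degeneration (using only that a tree is a $1$--dimensional nerve) is the standard way that citation unwinds into the stated long exact sequence. The proposal is correct and takes essentially the same approach.
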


Consider the amalgamated product of groups $G=G_1*_H G_2$. Let
$K_i$ be an Eilenberg--MacLane complex for $G_i$, $i=1,2$, and $L$
be an Eilenberg--MacLane complex for $H$. For convenience we assume
that each of these complexes have a single vertex. Let $u$, $v$,
$w$ be the vertices of $K_1$, $K_2$, $L$, respectively. An
Eilenberg--MacLane complex $K=K_1\cup (L\times [0,1])\cup K_2$ for
$G$ is obtained from $K_1$, $L\times [0,1]$, and $K_2$ by gluing
$L\times \{0\}$ to $K_1$ via a map induced by the inclusion
$H\hookrightarrow G_1$ and gluing $L\times \{1\}$ to $K_2$ via a
map induced by the inclusion $H\hookrightarrow G_2$.

\begin{thm}\label{thm4_2} \
{\rm(a)}\qua For $n > 2$ there is a short exact sequence
$$0\rightarrow \mathbb Z G \otimes _{G_1}\pi_n(K_1^{(n)})\oplus \mathbb Z G \otimes
_{G_2}\pi_n(K_2^{(n)})\rightarrow \pi_n(K^{(n)})\rightarrow \mathbb
Z G\otimes _H \pi_{n-1}(L^{(n-1)})\rightarrow 0.$$ 

{\rm(b)}\qua For $n=2$
there is a short exact sequence
$$0\rightarrow \mathbb Z G \otimes
_{G_1}\pi_2(K_1^{(2)})\oplus \mathbb Z G \otimes
_{G_2}\pi_2(K_2^{(2)}) \rightarrow \pi_2(K^{(2)})\rightarrow \mathbb
Z G\otimes _H M(H,{\bf x})\rightarrow 0,$$ where ${\bf x}$ is the
generating set for $H$ coming from the 1--skeleton of $L$.
\end{thm}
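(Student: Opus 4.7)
The plan is to read off the short exact sequence from the cellular chain complex of the universal cover $\tilde K$, using its decomposition along the Bass--Serre tree of $G = G_1 *_H G_2$, and then to pass from chain-level data to homotopy via the Hurewicz theorem.

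First, I would use the CW structure $K = K_1 \cup (L \times [0,1]) \cup K_2$ to partition the cells of $\tilde K$ into two classes: cells of vertex copies $\tilde K_v$ (with $v$ running over vertices of the Bass--Serre tree $T$, stabilizers conjugate to $G_1$ or $G_2$) and cells $\sigma \times (0,1)$ of cylinder copies $\tilde L_e \times [0,1]$ (with $e$ running over edges of $T$, stabilizer conjugate to $H$). A cell $\sigma \times (0,1)$ of dimension $k$ comes from a $(k-1)$--cell of $\tilde L_e$. Grouping cells into $G$--orbits yields a short exact sequence of $\mathbb Z G$--chain complexes
$$0 \to \mathbb Z G \otimes_{G_1} C_*(\tilde K_1) \oplus \mathbb Z G \otimes_{G_2} C_*(\tilde K_2) \to C_*(\tilde K) \to \mathbb Z G \otimes_H C_*(\tilde L)[1] \to 0,$$
where $[1]$ denotes the degree shift $C_*[1]_k = C_{k-1}$. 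This is the cellular incarnation of \fullref{lemma4 1} applied to the Bass--Serre cover of $\tilde K$, the shift recording the extra dimension coming from the cylinder direction.

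Next, I would apply the snake lemma in degree $n$. The cokernel obstruction lives in $H_{n-1}$ of the leftmost term, i.e.\ in $\mathbb Z G \otimes_{G_1} H_{n-1}(\tilde K_1) \oplus \mathbb Z G \otimes_{G_2} H_{n-1}(\tilde K_2)$. Since each $K_i = K(G_i, 1)$, the universal cover $\tilde K_i$ is contractible, so this obstruction vanishes for all $n \geq 2$. Using that $\mathbb Z G$ is free as a right module over $\mathbb Z G_i$ and over $\mathbb Z H$ (so that induction commutes with taking cycles), I would obtain the short exact sequence
$$0 \to \mathbb Z G \otimes_{G_1} Z_n(C_*(\tilde K_1)) \oplus \mathbb Z G \otimes_{G_2} Z_n(C_*(\tilde K_2)) \to Z_n(C_*(\tilde K)) \to \mathbb Z G \otimes_H Z_{n-1}(C_*(\tilde L)) \to 0.$$

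Finally, I would translate cycles into homotopy. Asphericity makes $\tilde K$, $\tilde K_i$, $\tilde L$ contractible, so $\tilde K^{(n)}$ and $\tilde K_i^{(n)}$ are $(n-1)$--connected and $\tilde L^{(n-1)}$ is $(n-2)$--connected. The Hurewicz theorem then gives $\pi_n(K^{(n)}) = Z_n(C_*(\tilde K))$, $\pi_n(K_i^{(n)}) = Z_n(C_*(\tilde K_i))$, and, for $n \geq 3$, $\pi_{n-1}(L^{(n-1)}) = Z_{n-1}(C_*(\tilde L))$, proving part (a). For part (b) with $n = 2$, the only change is that the right-hand module becomes $Z_1(C_*(\tilde L)) = H_1(\tilde L^{(1)})$, which is identified with the relation module $M(H, {\bf x})$ via the Fox derivative isomorphism recalled earlier in the paper. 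The main technical obstacle will be setting up the chain-level SES with the correct $\mathbb Z G$--equivariance and signs, so that the boundary of a cylinder cell $\sigma \times (0,1)$ really projects to $\partial \sigma$ in the shifted edge complex and the induced-module identifications are respected; once this is in place, the snake lemma and Hurewicz steps are formal.
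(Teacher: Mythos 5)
Your argument is correct, and it rests on the same two pillars as the paper's proof: the decomposition of the universal cover $\tilde K$ along the Bass--Serre tree of $G_1 *_H G_2$, and the endgame identifications via the Hurewicz theorem together with the Fox-derivative isomorphism $H_1(\Gamma(H,{\bf x}))\cong M(H,{\bf x})$ in the case $n=2$. Where you genuinely diverge is in how the exact sequence is produced. The paper covers the $n$--skeleton of $\tilde K$ by the translates of the two halves $K_1\cup(L\times[0,\frac{1}{2}])$ and $K_2\cup(L\times[\frac{1}{2},1])$, observes that the nerve of this covering is the Bass--Serre tree, and invokes the Mayer--Vietoris lemma for coverings with tree nerve (its Lemma 4.1, a consequence of the Mayer--Vietoris spectral sequence) to get a long exact sequence in homology, which collapses to a short exact sequence because the intersections $g''Y$ are $(n-1)$--dimensional while the pieces are $n$--dimensional and $(n-1)$--connected. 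You instead filter the full cellular chain complex of $\tilde K$ by vertex cells versus cylinder cells, obtaining a short exact sequence of induced $\mathbb Z G$--chain complexes, and extract the cycle-level short exact sequence from the snake lemma using the vanishing of $H_{n-1}$ of the subcomplex (contractibility of $\tilde K_1$, $\tilde K_2$) and freeness of $\mathbb Z G$ over $\mathbb Z G_i$ and $\mathbb Z H$. This buys two things: the spectral-sequence input is replaced by elementary homological algebra, and your identification $\pi_n(K_i^{(n)})\cong Z_n(C_*(\tilde K_i))$ refers directly to $K_i$ rather than to the $n$--skeleton of the homotopy-equivalent complex $K_i\cup(L\times[0,\frac{1}{2}])$, a point the paper passes through with a rather compressed Hurewicz argument. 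The cost is the equivariance and sign bookkeeping in setting up the chain-level sequence, which you flag yourself; it is routine, and your description of how the boundary of a cylinder cell $\sigma\times(0,1)$ projects to $\pm\,\partial\sigma\times(0,1)$ in the shifted quotient is the right way to discharge it.
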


\begin{proof} Let $X$ be the $n$--skeleton of the universal covering $\tilde
K$ and let $p\co  \tilde K\rightarrow K$ be the covering projection.
Let $K_u$ be the $n$--skeleton of $K_1\cup (L\times [0,\frac{1}{2}])$
and $K_v$ be the $n$--skeleton of the other half, $K_2\cup (L\times
[\frac{1}{2},1])$. Note that $K_u \cap K_v=L^{(n-1)}\times
\{\frac{1}{2}\}$. Let $\tilde w$ be a fixed lift of the point
$\{w\}\times \{\frac{1}{2}\}$. Let $X_u$ be the component of
$p^{-1}(K_u)$ that contains $\tilde w$ and $X_v$ be the component of
$p^{-1}(K_v)$ that contains $\tilde w$. The intersection $Y=X_u\cap
X_v$ is a component of $p^{-1}(K_u\cap K_v)=p^{-1}(L^{(n-1)}\times
\{\frac{1}{2}\})$ and hence is homeomorphic to the $(n-1)$--skeleton
of the universal covering of $L$. Note that $p^{-1}(K_u)$ is the
disjoint union of the translates $gX_u$, where $g$ is taken from
$T(G/G_1)$, a transversal for $G/G_1$. Analogously, $p^{-1}(K_v)$ is
the disjoint union of the translates $g'X_u$, where $g'$ is taken
from $T(G/G_2)$. Let $\cal{N}$ be the nerve associated with the
covering of $X$ by the components of $p^{-1}(K_u)$ and
$p^{-1}(K_v)$. This nerve is a simplicial tree, isomorphic to the
Bass--Serre tree associated with the amalgamated product $G_1*_HG_2$.
The vertices of this tree are the translates $gX_u$ for $g\in
T(G/G_1)$ and $g'X_v$ for $g'\in T(G/G_2)$. The intersection
$gX_u\cap g'X_v \ne \emptyset$ if and only if there is a $g''\in G$
so that $gX_u=g''X_u$ and $g'X_v=g''X_v$. In this case $gX_u\cap
g'X_v=g''(X_v\cap X_v)=g''Y$. Hence the edges of ${\cal N}$ are the
translates $g''Y$, $g''\in T(G/H)$. We apply the previous lemma and
obtain a long exact sequence
\begin{align*}
\cdots\rightarrow H_{n+1}(X)\rightarrow
\hspace{-10pt} &\bigoplus_{g''\in T(G/H)}\hspace{-10pt}
H_n(g''Y)\rightarrow\\
\rightarrow\hspace{-10pt}&\bigoplus_{g\in T(G/G_1)}\hspace{-10pt}
H_n(gX_u)
\hspace{-10pt}\bigoplus_{g'\in T(G/G_2)}\hspace{-10pt} 
H_n(g'X_v)\rightarrow H_{n}(X)\rightarrow\cdots 
\end{align*}
Since $Y$ is $(n-1)$--dimensional and both
$X_u$, $X_v$ are $n$--dimensional and $(n-1)$--connected, this yields the
short exact sequence 
$$ 0 \rightarrow\hspace{-10pt} \bigoplus_{g\in T(G/G_1)}\hspace{-10pt}
H_n(gX_u)\oplus
\hspace{-10pt} \bigoplus_{g'\in T(G/G_2)}\hspace{-10pt} 
H_n(g'X_v)\rightarrow H_{n}(X)
\rightarrow\hspace{-10pt} \bigoplus_{g''\in T(G/H)}\hspace{-10pt}
H_{n-1}(g''Y)\rightarrow 0.$$ 
Let us first assume that $n>2$.
Since $X_u$ is the $n$--skeleton of the universal covering of the
Eilenberg--MacLane complex $K_u$ (which is homotopically equivalent
to $K_1$) we have $H_n(X_u)\approx \pi_n(X_u)\approx
\pi_n(K_1^{(n)})$ by the Hurewicz Theorem. So
$$\bigoplus_{g\in T(G/G_1)}H_n(gX_u)\approx \mathbb Z
G\otimes_{G_1}\pi_n(K_1^{(n)})$$ as $\mathbb Z G$--modules. By
analogous arguments we have 
\begin{align*}
&\bigoplus_{g'\in
T(G/G_2)}H_n(g'X_v)\approx \mathbb Z
G\otimes_{G_2}\pi_n(K_2^{(n)})\\
\tag*{\hbox{and}}
&\bigoplus_{g''\in
T(G/H)}H_{n-1}(g''Y)\approx \mathbb Z
G\otimes_{H}\pi_{n-1}(L^{(n-1)}).
\end{align*}
So the above short exact
sequence, after making the isomorphic replacements, yields the short
exact sequence exhibited in statement (a).

Let us assume now that $n=2$. In that case $Y$ is the 1--skeleton of
the universal covering of $L$ and hence is the Cayley-graph of the
group $H$, associated with the generating set ${\bf x}$ that arises
from the 1--skeleton of $L$ (recall that we assumed $L$ to have a
single vertex, so the 1--skeleton is a wedge of circles). Thus
$H_1(Y)\approx H_1(\Gamma(H,{\bf x}))\approx M(H,{\bf x})$ and
$$\bigoplus_{g''\in T(G/H)}H_1(g''Y)\approx \mathbb Z G \otimes_H
M(H,{\bf x}).$$ The above short exact sequence, after making the
isomorphic replacements, yields the short exact sequence exhibited
in statement (b). \end{proof}

A 2--complex $K$ is {\sl aspherical} if $\pi_2(K)=0$. A group is {\sl
aspherical} if it is the fundamental group of an aspherical
2--complex. A consequence of the above theorem is that for aspherical
groups every relation module is also a second homotopy module.
Indeed, assume $J$ is an aspherical 2--complex with fundamental group
$G$ (in particular $J$ is an Eilenberg--MacLane complex). Suppose
$M=M(G,{\bf x})$ is a relation module for $G$ associated with some
generating set. Let $L$ be an Eilenberg--MacLane complex for $G$ with
1--skeleton a bouquet of circles in one-to-one correspondence with
the elements of ${\bf x}$. Writing $G$ as an amalgamated product
$G=G*_G G$ we apply the above construction and build an
Eilenberg--MacLane complex $K=K_1\cup (L\times [0,1])\cup K_2$ with
$K_1=J$, $K_2=J$. The ends $L\times \{0\}$ and $L\times \{1\}$ are
attached to the two copies of $J$ via a map induced by the identity
map from $G$ to $G$. \fullref{thm4_2}(b) implies the following
result.

\begin{corollary}\label{cor4_2} Let $J$ be an aspherical 2--complex with fundamental group $G$. Let
$M(G,{\bf x})$ be a relation module associated with some generating
set ${\bf x}$ and let $L$ be an Eilenberg--MacLane complex with
1--skeleton a bouquet of circles in one-to-one correspondence with
${\bf x}$. Then the 2--complex $K^{(2)}=(J\cup(L\times [0,1])\cup
J)^{(2)}$ has Euler-characteristic
$\chi(K^{(2)})=2\chi_{min}(G)-1+|{\bf x}|$ and the second homotopy
module $\pi_2(K^{(2)})$ is isomorphic to the relation module
$M(G,{\bf x})$.
\end{corollary}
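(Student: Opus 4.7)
The plan is to verify the two assertions separately: compute $\chi(K^{(2)})$ by direct cell counting, and extract $\pi_2(K^{(2)}) \cong M(G,{\bf x})$ from \fullref{thm4_2}(b) applied to the trivial amalgamation $G = G*_G G$.

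For the Euler-characteristic, I would enumerate the cells of $K = J\cup(L\times[0,1])\cup J$. They consist of all cells of the two copies of $J$, together with one cell $\sigma\times(0,1)$ for each cell $\sigma$ of $L$; cells of $L\times\{0\}$ and $L\times\{1\}$ are absorbed into the two copies of $J$ by the gluing maps rather than contributing fresh cells. Restricting to the $2$-skeleton discards every $\sigma\times(0,1)$ with $\dim\sigma\geq 2$ and leaves exactly one extra $1$-cell $w\times(0,1)$ (from the unique vertex of $L$) together with $|{\bf x}|$ extra $2$-cells $e\times(0,1)$, one for each $1$-cell of $L$. The alternating sum then collapses to $\chi(K^{(2)}) = 2\chi(J) - 1 + |{\bf x}|$, and since $J$ is aspherical with fundamental group $G$, $\chi(J) = \chi_{min}(G)$, yielding the claimed formula.

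For the second homotopy module, I would specialize \fullref{thm4_2}(b) to $G_1 = G_2 = H = G$, $K_1 = K_2 = J$, and observe that both inclusions of $H$ into $G_i$ are the identity. The short exact sequence provided by that theorem has left term $\mathbb Z G\otimes_G \pi_2(J^{(2)})\oplus\mathbb Z G\otimes_G\pi_2(J^{(2)})$ and right term $\mathbb Z G\otimes_G M(G,{\bf x})$. Because $J$ is aspherical and $2$-dimensional, $\pi_2(J^{(2)}) = \pi_2(J) = 0$, so the left term vanishes; the right term is canonically $M(G,{\bf x})$. The sequence therefore collapses to an isomorphism $\pi_2(K^{(2)}) \cong M(G,{\bf x})$ of $\mathbb Z G$--modules.

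I do not anticipate a genuine obstacle. The only point that requires care is the cell count: one must remember that the gluing maps $L\times\{0\}\to J$ and $L\times\{1\}\to J$ identify the end copies of $L$ with existing $J$-cells rather than adding new cells to $K$, so the only cells of $L\times[0,1]$ surviving in the $2$-skeleton of $K$ are the single $1$-cell $w\times(0,1)$ and the $|{\bf x}|$ cylindrical $2$-cells $e\times(0,1)$. Once this bookkeeping is right, both conclusions follow immediately from \fullref{thm4_2}(b) and the asphericity of $J$.
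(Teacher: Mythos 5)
Your proposal is correct and follows the paper's own route: the paper likewise writes $G=G*_GG$, takes $K_1=K_2=J$ with the ends of $L\times[0,1]$ attached via the identity on $G$, and reads the isomorphism $\pi_2(K^{(2)})\cong M(G,{\bf x})$ off the short exact sequence of \fullref{thm4_2}(b) after the left term vanishes by asphericity of $J$. The cell count for the Euler characteristic, which the paper leaves implicit, is carried out correctly in your argument.
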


Consider the group $G$ presented by $\langle x, y \ |\
xy^2x^{-1}=y^3\rangle$. Let $J$ be the 2--complex built from this
presentation.  Since $J$ is aspherical (see Lyndon \cite{Lyndon}) we have
$\chi_{min}(G)$ $=\chi(J)=0$ and $J$ is the only 2--complex (up to
homotopy equivalence) on the minimal Euler-characteristic level.
The next result shows that the situation is different on the next
level, $\chi_{min}(G)+1$.

\begin{thm}\label{thm4_3} Let $J$ be the 2--complex built on the presentation
$$\langle x, y \ |\ xy^2x^{-1}=y^3\rangle$$ for $G$. Let $K$ be the
2--complex built on the presentation $$\langle x, y, x', y' \ |\
xy^2x^{-1}=y^3, x'{y'}^2{x'}^{-1}={y'}^3, x=x', y^4={y'}^4 \rangle
$$ for $G$. Then $J\vee S^2$ and $K$ are homotopically distinct
2--complexes with fundamental group $G$ and Euler-characteristic
$\chi_{min}(G)+1$.
\end{thm}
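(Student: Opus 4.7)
The plan is to distinguish $J\vee S^2$ and $K$ via their second homotopy modules as $\mathbb Z G$--modules, since $\pi_2$ of a $2$--complex, viewed as a $\mathbb Z\pi_1$--module, is a homotopy invariant. First I would verify the cell counts: since $J$ is aspherical with $\chi(J)=0$, one has $\chi(J\vee S^2)=1$, while $K$ has one vertex, four edges, and four 2--cells, giving $\chi(K)=1-4+4=1=\chi_{min}(G)+1$. Both complexes have fundamental group $G$: for $K$, the relations $x=x'$ and $y^4=(y')^4$, together with the fact (noted in the introduction) that $\{x,y^4\}$ generates $G$, identify the two copies of $G$ in the amalgamated product $G\ast_{G}G\cong G$.

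Next I would identify $K$, up to homotopy equivalence, with the 2--skeleton produced by \fullref{cor4_2} applied to the decomposition $G\cong G\ast_{G}G$ along the generating set ${\bf x}=\{x,y^4\}$ of the amalgamating subgroup. After collapsing the connecting edge $\{w\}\times [0,1]$ of the cylinder $L\times[0,1]$, the two ``cylinder'' 2--cells $a\times[0,1]$ and $b\times[0,1]$ reduce precisely to the identification relators $x=x'$ and $y^4=(y')^4$, while the two copies of $J$ contribute the remaining relators $xy^2x^{-1}y^{-3}$ and $x'(y')^2(x')^{-1}(y')^{-3}$. Thus \fullref{cor4_2} yields $\pi_2(K)\cong M(G,\{x,y^4\})$, which by \fullref{thm1_1} cannot be generated by a single element over $\mathbb Z G$.

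For $J\vee S^2$, asphericity of $J$ makes $\tilde J$ contractible, so the universal cover of $J\vee S^2$ has the homotopy type of a wedge of 2--spheres indexed by $G$. Therefore $\pi_2(J\vee S^2)\cong H_2\cong \mathbb Z G$ as a $\mathbb Z G$--module, and in particular is cyclic. Since one module is singly generated and the other is not, $\pi_2(K)\not\cong\pi_2(J\vee S^2)$, so no homotopy equivalence between $K$ and $J\vee S^2$ can exist. The only delicate point in the argument is the CW bookkeeping in the second paragraph that realizes $K$ as the Corollary 4.2 model; the real conceptual content is already packaged in \fullref{thm1_1} and \fullref{cor4_2}.
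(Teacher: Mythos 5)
Your proposal is correct and follows essentially the same route as the paper: realize $K$ (after collapsing the connecting edge) as the 2--skeleton of the mapping-cylinder model for $G\ast_G G$ from \fullref{cor4_2}, conclude $\pi_2(K)\cong M(G,\{x,y^4\})$, which is not singly generated by \fullref{thm1_1}, while $\pi_2(J\vee S^2)\cong \mathbb Z G$ is cyclic. Your write-up is in fact slightly more explicit than the paper's on the CW bookkeeping and on invoking \fullref{thm1_1}.
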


\begin{proof} We write $G$ as an amalgamated product $G=G*_G G$ and build
an Eilenberg--MacLane complex $K'=J\cup (L\times [0,1])\cup J$, where
we take for $L$ an Eilenberg--MacLane complex with 1--skeleton a
bouquet of two circles corresponding to the generating set $\{ x,
y^4\}$. By \fullref{cor4_2}, $\pi_2(K'^{(2)})$ is isomorphic to
$M(G,\{x,y^4\})$ and hence is different from $\mathbb Z G$. Note
that $K'$ can be built so that, after collapsing a maximal tree in
the 1--skeleton of $K'^{(2)}$ (which consists of a single edge) we
obtain the complex $K$. Since $\pi_2(J\vee S^2)=\mathbb Z G$ the
desired result follows. \end{proof}

In \cite{BerridgeDunwoody} P\,H Berridge and M Dunwoody
give an infinite sequence of pairwise distinct stably free
non-free relation modules for the trefoil group $G$ and ask the
question whether this can be used to construct infinitely many
pairwise non-homotopic 2--complexes with identical
Euler-characteristic and fundamental group $G$. The above theorem
answers this question affirmatively. Consider the trefoil group
presented by $\langle x, y \ | \ x^2=y^3 \rangle$. Let ${\bf
x_i}=\{x^{2i+1}, y^{3i+1}\}$, $i\in \mathbb N$. The set of
relation modules $\{ M(G,{\bf x_i}) \ |\ i\in \mathbb N \}$
contains infinitely many non-isomorphic stably free non-free
modules of rank one \cite{BerridgeDunwoody}.

\begin{thm}\label{thm4_4} Let $G$ be the trefoil group presented by
$\langle x, y \ |\ x^2=y^3 \rangle$. Let $K_i$ be the 2--complex
built on the presentation $$\langle x, y, x', y' \ |\ x^2=y^3,
{x'}^2={y'}^3, x^{2i+1}={x'}^{2i+1}, y^{3i+1}={y'}^{3i+1} \rangle
$$ for $G$. The set $\{ K_i \ |\ i\in \mathbb N \}$ contains infinitely many
homotopically distinct 2--complexes with fundamental group $G$ and
Euler-characteristic equal to one.
\end{thm}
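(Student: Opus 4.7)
The plan is to imitate the construction used in \fullref{thm4_3} but with the family of generating sets ${\bf x_i}=\{x^{2i+1}, y^{3i+1}\}$ of the trefoil group $G$, and then exploit the Berridge--Dunwoody fact that infinitely many of the modules $M(G,{\bf x_i})$ are pairwise non-isomorphic to distinguish the resulting 2--complexes up to homotopy.

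Concretely, let $J$ be the 2--complex built from the aspherical presentation $\langle x,y \ |\ x^2=y^3\rangle$, so $\chi_{min}(G)=\chi(J)=0$ and $J$ is an Eilenberg--MacLane complex for $G$. For each $i$ let $L_i$ be an Eilenberg--MacLane complex for $G$ whose 1--skeleton is a wedge of two circles in one-to-one correspondence with the generating set ${\bf x_i}$. Writing $G=G*_G G$ and using the construction preceding \fullref{cor4_2}, I form the Eilenberg--MacLane complex $K'_i=J\cup (L_i\times[0,1])\cup J$ in which both ends of $L_i\times[0,1]$ are attached to copies of $J$ via maps induced by the identity on $G$. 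By \fullref{cor4_2}, the 2--skeleton $K'^{(2)}_i$ has Euler-characteristic $2\chi_{min}(G)-1+|{\bf x_i}|=2(0)-1+2=1$, and its second homotopy module is isomorphic to the relation module $M(G,{\bf x_i})$.

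Next I identify $K'^{(2)}_i$ with $K_i$ up to homotopy. The 1--skeleton of $K'^{(2)}_i$ has a single edge (the one coming from the interval factor in $L_i\times[0,1]$) which is not a loop; collapsing this maximal tree gives exactly the 2--complex with the four generators $x,y,x',y'$ and the four relators listed in the statement, i.e.\ $K_i$. Thus $K_i\simeq K'^{(2)}_i$, so $\pi_2(K_i)\cong M(G,{\bf x_i})$ and $\chi(K_i)=1$.

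Finally, since homotopy equivalent 2--complexes have isomorphic second homotopy modules as $\mathbb{Z}G$--modules, the conclusion reduces to selecting an infinite sub-family of the $K_i$ with pairwise non-isomorphic $\pi_2$. This is precisely what the Berridge--Dunwoody result cited just before the theorem supplies: the set $\{M(G,{\bf x_i}):i\in\mathbb{N}\}$ contains infinitely many non-isomorphic stably free non-free rank-one modules, and the corresponding $K_i$ therefore form an infinite set of pairwise non-homotopic 2--complexes. The only step that requires care is the identification of the collapsed complex $K'^{(2)}_i$ with the 4--generator, 4--relator presentation 2--complex $K_i$; once this bookkeeping is done, the rest is an immediate combination of \fullref{cor4_2} and the Berridge--Dunwoody module-theoretic input.
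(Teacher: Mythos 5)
Your proposal is correct and follows exactly the route the paper intends: the paper gives no separate proof of this theorem, treating it as the direct analogue of \fullref{thm4_3} with the generating sets ${\bf x_i}$, combined via \fullref{cor4_2} with the Berridge--Dunwoody fact that infinitely many of the $M(G,{\bf x_i})$ are pairwise non-isomorphic. Your explicit handling of the tree-collapse identification of $K'^{(2)}_i$ with $K_i$ and the Euler-characteristic bookkeeping matches the argument in the proof of \fullref{thm4_3}.
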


In \cite{Lustig} M Lustig constructs an infinite
collection of 2--dimensional homotopy types with the same
fundamental group  and Euler characteristic. However the
fundamental group in these examples is distinct from the trefoil
group. \fullref{thm4_4} answers the precise question raised by
Berridge and Dunwoody \cite{BerridgeDunwoody} in the affirmative.

\bibliographystyle{gtart}
\bibliography{link}

\end{document}